\DeclareTextFontCommand{\textcyr}{\fontencoding{OT2}
    \fontfamily{wncyr}\fontseries{m}\fontshape{n}\selectfont}
\newtheorem{lemma}{Lemma}%[section]
\newtheorem{corollary}{Corollary}
\newtheorem*{theorem*}{Theorem}
\newtheorem*{proposition*}{Proposition}
\newtheorem*{lemma*}{Lemma}
\newtheorem*{corollary*}{Corollary}
\newtheorem*{question*}{Question}
\newtheorem*{conjecture*}{Conjecture}
\newtheorem*{claim*}{Claim}
\newtheorem*{introtheorem*}{Theorem}
\newtheorem*{introproposition*}{Proposition}
\newtheorem*{introlemma*}{Lemma}
\newtheorem*{introcorollary*}{Corollary}
\theoremstyle{definition}
\theoremstyle{remark}
\numberwithin{equation}{section}
\newcommand{\Gal}{\operatorname{Gal}}
\newcommand{\GL}{\operatorname{GL}}
\newcommand{\End}{\operatorname{End}}
\newcommand{\Z}{\mathbb{Z}}
\newcommand{\Q}{\mathbb{Q}}
\newcommand{\R}{\mathbb{R}}
\newcommand{\C}{\mathbb{C}}
\def\AA{{\mathbf{A}}}
\DeclareTextFontCommand{\textcyr}{\fontencoding{OT2}
    \fontfamily{wncyr}\fontseries{m}\fontshape{n}\selectfont}
\def\C{{\mathbb{C}}}
\def\Q{{\mathbb{Q}}}
\def\Z{{\mathbb{Z}}}
\def\im{{\rm im}}
\def\GL{{\rm GL}}
\renewcommand{\Gal}{\textup{Gal}}
\renewcommand{\GL}{{\rm GL}}
\newcommand{\beq}{\begin{equation}}
\newcommand{\eeq}{\end{equation}}
\renewcommand{\End}{\textup{End}}
\def\Hg{{\rm Hg\,}}
\def\sZ{{\mathcal{Z}}}
\def\AA{{\mathfrak{A}}}
\def\nc{{\rm nc}}
\def\Qbar{{\overline{\Q}}}
\title[Hodge group]
{
%{\protect\hfill \normalfont \tiny
%\today
%            \\ \vspace{10pt}}
The Hodge group and endomorphism algebra\\ of an Abelian variety
% {\normalfont\tiny A preliminary version}
}
\author{Mikhail Borovoi}
\thanks{M.V. Borovoi, The Hodge group and the algebra of endomorphisms of an Abelian variety
(Russian). In:  ``Problems in Group Theory and Homological Algebra'', pp. 124--126,
Yaroslav. Gos. Univ., Yaroslavl, 1981.}
\begin{document}

%\date{\today}

\begin{abstract}
This is an English translation of the author's 1981 note in Russian, published
in a Yaroslavl collection.
We prove that if an Abelian variety over  $\C$  has no nontrivial endomorphisms,
then its Hodge group is $\Q$-simple.
\end{abstract}

\maketitle

In this note we prove that if an Abelian variety $A$ over $\C$ has no nontrivial endomorphisms,
then its Hodge group $\Hg A$ is a $\Q$-simple algebraic group.
Actually a slightly more general result is obtained.
The note was inspired by Tankeev's paper \cite{Tankeev}.
The author is very grateful to Yu.G. Zarhin for useful discussions.
\medskip

Let $A$ be an Abelian variety over $\C$.
Set $V=H_1(A,\Q)$.
Denote by $T^1$ the compact one-dimensional torus over $\R$:
$T^1=\{z\in\C : |z|=1\}$.
Denote by $\varphi\colon T^1\to \GL(V_\R)$ the homomorphism
defining the complex structure in $V_\R=H_1(A,\R)$.
By definition, the Hodge group $\Hg A$ is the smallest  algebraic subgroup
$H\subset \GL(V)$ defined over $\Q$ such that $H_\R\supset \im\, \varphi$.
Denote by $\End\,A$ the ring of endomorphisms of $A$,
and set $\End^\circ A=\End\, A\otimes_\Z \Q$.

\begin{theorem*}
Let $A$ be a polarized Abelian variety.
Let $F$ denote the center of $\End^\circ A$, and let $F_0$ denote the subalgebra
of fixed points in $F$ of the Rosati involution induced by the polarization.
Set $G=\Hg A$ and denote by $r$ the number of factors in the decomposition
of the commutator subgroup $G'=(G,G)$ of $G$ into an almost direct product of $\Q$-simple groups.
Then $r\le \dim_\Q F_0$\,.
\end{theorem*}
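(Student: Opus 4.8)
The plan is to exploit the two standard facts that (i) $\End^{\circ}A$ is exactly the commutant of $G=\Hg A$ in $\End_{\mathbb Q}(V)$ (the Hodge classes in $\End(V)$), and (ii) the polarization makes $G$ reductive and realizes $G\subseteq\gSp(V,\psi)$, so that the $\psi$-adjoint involution $x\mapsto x^{\dagger}$ of $\End_{\mathbb Q}(V)$ preserves $D:=\End^{\circ}A$, restricts there to the Rosati involution, and in particular preserves $F=Z(D)$ with $F_{0}=F^{\dagger=1}$. Write $G'=(G,G)=G_{1}\cdots G_{r}$ as an almost direct product of $\mathbb Q$-simple groups; since $G$ is connected each $G_{i}$ is normal in $G$. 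A first consequence of minimality of $G$ is that for every $i$ the composite $\varphi^{\mathrm{ad}}_{i}\colon T^{1}\to G_{\mathbb R}\to (G_{i}^{\mathrm{ad}})_{\mathbb R}$ is nontrivial — otherwise $\operatorname{im}\varphi$ would lie in the proper $\mathbb Q$-subgroup $\ker(G\to G_{i}^{\mathrm{ad}})$ — and hence the Hodge cocharacter $\mu\colon\mathbb G_{m,\mathbb C}\to G_{\mathbb C}$ refining $\varphi$ is nontrivial modulo the centre on each $\mathbb Q$-simple factor.

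\emph{The case $\End^{\circ}A=\mathbb Q$ (which is the Theorem of the abstract).} Here $\End_{G}(V)=\mathbb Q$, so $V\otimes\overline{\mathbb Q}$ is an irreducible $G_{\overline{\mathbb Q}}$-module; writing $G'_{\overline{\mathbb Q}}=\prod_{\ell}H_{\ell}$ as a product of $\overline{\mathbb Q}$-simple groups one gets $V\otimes\overline{\mathbb Q}=\bigotimes_{\ell}W_{\ell}$ with every $W_{\ell}$ a nontrivial irreducible $H_{\ell}$-module (faithfulness). Since $A$ is an abelian variety, $\mu$ acts on $V_{\mathbb C}$ with two consecutive weights; as $\mu$ acts factorwise on the tensor product, at most one $H_{\ell}$ can have $\mu$ acting non-scalarly on $W_{\ell}$. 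On the other hand the $H_{\ell}$ in a given Galois orbit $\Sigma_{i}$ are precisely the $\overline{\mathbb Q}$-simple factors of $G_{i}$, and by the previous paragraph at least one of them has $\mu$ acting non-scalarly; two $\mathbb Q$-simple factors would produce two such $H_{\ell}$, a contradiction. Hence $r\le 1=\dim_{\mathbb Q}F_{0}$, and since $G\subseteq\gSp(V,\psi)$ has finite centre, $G=G'=G_{1}$ is $\mathbb Q$-simple. I would present exactly this argument first, as it already contains the whole mechanism.

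\emph{The general bound.} Decompose $V=\bigoplus_{\mathbf S}V_{\mathbf S}$, where $V_{\mathbf S}$ is the sum of those $G$-isotypic components $U\subseteq V$ with $\{\,i:G_{i}\text{ acts nontrivially on }U\,\}=\mathbf S$. Since $\psi$ is $G$-invariant and an isotypic component and its dual have the same ``type'', $\psi$ pairs distinct $V_{\mathbf S}$'s trivially, so the orthogonal central idempotents $f_{\mathbf S}$ cutting out the $V_{\mathbf S}$ lie in $F$ and satisfy $f_{\mathbf S}^{\dagger}=f_{\mathbf S}$. Thus $F_{0}=\prod_{\mathbf S}(f_{\mathbf S}F)^{\dagger=1}$, and because each $G_{i}$ is faithful on $V$ the occurring types cover $\{1,\dots,r\}$; it is therefore enough to prove $\dim_{\mathbb Q}(f_{\mathbf S}F)^{\dagger=1}\ge|\mathbf S|$ for each occurring $\mathbf S$, which then gives $\dim_{\mathbb Q}F_{0}\ge\sum_{\mathbf S}|\mathbf S|\ge r$. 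Singleton types are trivial (the left side contains $\mathbb Q f_{\mathbf S}$). For $|\mathbf S|\ge 2$, writing $G_{i}=\mathrm{Res}_{k_{i}/\mathbb Q}(H_{i}')$ with $H_{i}'$ absolutely simple, the fact that $G_{i}$ has no trivial constituent on $V_{\mathbf S}$ produces a copy of $k_{i}$ in the centre of the $G_{i}$-commutant on $V_{\mathbf S}$; this copy commutes with the remaining $G_{j}$ and with the central torus, hence lies in $Z(\End_{G}(V_{\mathbf S}))=f_{\mathbf S}F$, and by positivity of $\dagger$ it contributes $[k_{i}^{\dagger}:\mathbb Q]$ to $\dim_{\mathbb Q}(f_{\mathbf S}F)^{\dagger=1}$. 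That the contributions from the various $i\in\mathbf S$ really add up to at least $|\mathbf S|$ is forced by the weight-$(0,1)$ condition together with Galois: on each $\overline{\mathbb Q}$-constituent $\bigotimes_{\ell}W_{\ell}$ of $V_{\mathbf S}$ the cocharacter $\mu$ is non-scalar on at most one $H_{\ell}$ in its support, while Galois acts transitively on each $\Sigma_{i}$ and ``in parallel'' on those $\Sigma_{i}$ with equal $k_{i}$; chasing the Galois orbits of the isotypic components of $V_{\mathbf S}$, and tracking how $\dagger$ permutes the Wedderburn blocks of $D$, one finds that either these components fall into $\ge|\mathbf S|$ distinct $\dagger$-orbits of blocks, or they merge, in which case the occurring fields $k_{i}$ are correspondingly large.

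\emph{Main obstacle.} The only substantial step is this last one: converting the minuscule shape of $\mu$ and the Galois combinatorics of the $\Sigma_{i}$ into the numerical inequality $\dim_{\mathbb Q}(f_{\mathbf S}F)^{\dagger=1}\ge|\mathbf S|$, while simultaneously keeping the Rosati involution under control. Everything that precedes it — the commutant description of $\End^{\circ}A$, reductivity of $G$, minimality forcing $\varphi^{\mathrm{ad}}_{i}\neq 1$, and the $\psi$-orthogonality of the type decomposition — is formal, and the case $\End^{\circ}A=\mathbb Q$ shows that in the absence of extra endomorphisms the obstacle evaporates.
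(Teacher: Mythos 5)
Your argument is complete only in the special case $\End^\circ A=\Q$: there the minuscule-weight argument (the Hodge cocharacter $\mu$ has two weights on $V_\C$, hence is non-scalar on at most one tensor factor of the irreducible $V_{\overline\Q}$), combined with minimality of the Hodge group forcing $\mu$ to be noncentral on every $\Q$-simple factor, does yield $r\le 1$. This is a sound modern substitute for the ingredient the paper takes from Satake (each $\R$-irreducible constituent of the symplectic representation is nontrivial on at most one noncompact simple factor), together with the paper's Lemma 1. But the statement you were asked to prove is the general bound $r\le\dim_\Q F_0$, and for that your proposal has a genuine gap which you yourself flag as the ``main obstacle'': the whole general case is reduced to the per-type inequality $\dim_\Q(f_{\mathbf S}F)^{\dagger=1}\ge|\mathbf S|$, and the justification offered (``chasing the Galois orbits \dots one finds that either these components fall into $\ge|\mathbf S|$ distinct $\dagger$-orbits of blocks, or they merge, in which case the occurring fields $k_i$ are correspondingly large'') is not an argument; no mechanism is given that converts the Galois combinatorics and the two-weight condition into that dimension count, and the bookkeeping of how the Rosati involution acts is left vague (note, incidentally, that a positive involution cannot interchange two simple blocks of $\End^\circ A$, so ``tracking how $\dagger$ permutes the Wedderburn blocks'' is a red herring -- positivity is exactly the structural input you are not using).

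The paper avoids the per-type reduction entirely by a single global count over $\R$. Its Lemma 2 shows that $\dim_\Q F_0$ equals the number $r_\rho$ of pairwise nonequivalent $\R$-irreducible summands of $V_\R$: indeed $r_\rho$ equals the number $r_{\mathfrak A}$ of simple factors of $\End^\circ A\otimes_\Q\R$, and positivity of the Rosati involution (trivial on real centers, complex conjugation on complex centers, Mumford \S 21) gives $F_0\otimes_\Q\R\cong\R^{r_{\mathfrak A}}$. Then Satake's theorem gives $r_{\rm nc}\le r_\rho$, and Lemma 1 (a normal $\Q$-subgroup with compact real points is central, by minimality of $\Hg A$) shows every Galois orbit of absolutely simple factors, i.e.\ every $\Q$-simple factor, contains a noncompact factor, so $r\le r_{\rm nc}\le r_\rho=\dim_\Q F_0$. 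If you want to keep your cocharacter language, the fix is to imitate this: identify $\dim_\Q F_0$ with the number of $\R$-isotypic types via positivity of the Rosati involution, and apply your ``at most one factor sees $\mu$ non-scalarly'' argument to each $\R$-irreducible (or each $\overline\Q$-irreducible Galois orbit of) constituent, rather than trying to prove a separate inequality inside each type $\mathbf S$.
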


\begin{corollary}\label{cor:1}
If $F=\Q$ (in particular, if $\End^\circ A=\Q$), then $\Hg A$ is a $\Q$-simple group.
\end{corollary}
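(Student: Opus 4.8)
We prove the Theorem; Corollary~\ref{cor:1} then follows at once, for $F=\Q$ gives $r\le\dim_\Q F_0\le\dim_\Q F=1$, and the same count shows that when $F=\Q$ the identity component of the center of $\Hg A$ is trivial (a nontrivial central subtorus would produce at least two irreducible constituents in $V_{\Qbar}$, hence $\dim_\Q F\ge2$), so that $G=G'$ is nontrivial and therefore $\Q$-simple.

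\emph{Setup and reduction to a counting problem.} Write $G=\Hg A$, a connected reductive $\Q$-group, and let $B\subseteq\End_\Q(V)$ be the $\Q$-span of $G$. Using the standard fact that $\End^\circ A=C_{\End_\Q(V)}(G)$ one gets: $B$ is semisimple, $\End^\circ A=C_{\End_\Q(V)}(B)$ by the double centralizer theorem, and $F=Z(\End^\circ A)=Z(B)$. The polarization provides a nondegenerate alternating form $\psi$ on $V$ with $G\subseteq\Sp(V,\psi)$; the Rosati involution is the adjoint $x\mapsto x^{*}$ for $\psi$, it preserves $B$ (since $g^{*}=g^{-1}$ for $g\in G$), and it restricts on $F$ to the canonical involution $\iota$ whose fixed algebra is $F_0$. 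Decompose $V_\C$ under the commuting actions of $G_\C$ and of $F\otimes_\Q\C\cong\prod_{\chi\in\Hom(F,\C)}\C$: this gives $V_\C=\bigoplus_\chi V_\C[\chi]$, where each $V_\C[\chi]$ is nonzero (cut out by a nonzero idempotent of $\End^\circ A$) and $G_\C$-isotypic with underlying irreducible representation $\rho_\chi$. Hence $\chi\mapsto\rho_\chi$ is a bijection onto the set of irreducible constituents of $V_\C$; complex conjugation on $V_\C$ carries $\rho_\chi$ to $\rho_{\chi\circ\iota}$; and consequently the number of $\langle\iota\rangle$-orbits on $\Hom(F,\C)$ equals $\dim_\Q F_0$. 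It therefore suffices to construct an injection from the set of $\Q$-simple factors of $G'$ into $\Hom(F,\C)/\langle\iota\rangle$.

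\emph{Input from the Hodge cocharacter.} Let $h\colon\mathbb{S}\to G_\R$ extend $\varphi$ and let $\mu\colon\gm\to G_\C$ be the associated cocharacter. Since $V$ carries a weight $-1$ Hodge structure of type $\{(-1,0),(0,-1)\}$, $\mu$ acts on $V_\C$ with weights $0$ and $1$ only. Write $G'_{\Qbar}=\prod_k H_k$ as an almost-direct product of absolutely almost-simple factors; $\Gal(\Qbar/\Q)$ permutes the $H_k$ with exactly $r$ orbits $O_1,\dots,O_r$, corresponding to the $\Q$-simple factors $G_1,\dots,G_r$ of $G'$. Two observations: first, for each $i$ there is $k_i\in O_i$ along which $\mu$ has nonzero component — otherwise $\im\varphi$ would lie in $(Z(G)^\circ\cdot\prod_{i'\ne i}G_{i'})_\R$, a proper $\Q$-subgroup, contradicting the minimality defining $\Hg A$. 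Second, decomposing the weights of $\mu$ on a constituent $\rho_\chi$ over the factors $H_k$ and the central torus, and using that these weights are integers confined to $\{0,1\}$, one finds that \emph{at most one} factor $H_k$ is engaged nontrivially by $\mu$ on $\rho_\chi$, that this factor then acts on $\rho_\chi$ through a minuscule representation, and (invoking the finiteness of the center of an almost-simple group) that $H_{k_i}$ is the $\mu$-engaged factor of at least one constituent, say of $\rho_{\chi_i}$.

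\emph{Conclusion; the main obstacle.} I claim $i\mapsto(\text{class of }\chi_i\text{ in }\Hom(F,\C)/\langle\iota\rangle)$ is injective. If $\rho_{\chi_i}=\rho_{\chi_{i'}}$ then $H_{k_i}$ and $H_{k_{i'}}$ are both the unique $\mu$-engaged factor of the same constituent, so $k_i=k_{i'}$, impossible for $i\ne i'$ since $O_i\ne O_{i'}$. If instead $\rho_{\chi_{i'}}=\rho_{\chi_i\circ\iota}=\overline{\rho_{\chi_i}}$, then, complex conjugation being a Galois element that preserves the orbits and carries the $\mu$-engaged factor of a constituent to that of its conjugate, the $\mu$-engaged factor $H_{k_{i'}}$ of $\rho_{\chi_{i'}}$ lies in $O_i$; but $k_{i'}\in O_{i'}$, again a contradiction. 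Hence the map is injective and $r\le\dim_\Q F_0$. The part I expect to need genuine care is the middle paragraph: turning ``$\mu$-weights in $\{0,1\}$'' into ``at most one almost-simple factor engaged per constituent, minusculely'', keeping honest track of the contribution of the central torus and of possibly non-faithful constituents, and checking that each $\Q$-simple factor of $G'$ is truly engaged on some constituent rather than acting by scalars there. Granting that, the bijection between constituents and $\Hom(F,\C)$ together with its conjugation symmetry makes the remaining counting — in particular the passage from $\dim_\Q F$ down to $\dim_\Q F_0$ — purely formal.
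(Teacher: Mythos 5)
Your deduction of the Corollary from the Theorem is correct, and mildly different from the paper's: you rule out a positive\hyp{}dimensional center by observing that a nontrivial central torus inside $\Sp(V,\psi)$ would act on $V_{\Qbar}$ through at least two central characters ($\chi$ and $-\chi$), forcing $\dim_\Q F\ge 2$, whereas the paper places the center $C$ inside $F^\times=\Q^\times$ and uses Proposition (b) to see that $C_\R$ is compact, hence $C$ finite and $G=G'$. Your proof of the Theorem itself is a genuinely different route: in place of the paper's Lemma 2, of Satake's theorem over $\R$ (at most one noncompact simple factor acts nontrivially on each $\R$-irreducible summand of $V_\R$), and of Lemma 1, you propose a Deligne-style argument over $\Qbar$ with the Hodge cocharacter, an injection of the set of $\Q$-simple factors of $G'$ into $\Hom(F,\C)/\langle\iota\rangle$, and the count $\#\bigl(\Hom(F,\C)/\langle\iota\rangle\bigr)=\dim_\Q F_0$ (that count is indeed purely formal).

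As submitted, however, the argument has genuine gaps. First, the setup fails as written: $h$ and $\mu$ do not factor through $G=\Hg A$ but only through the Mumford--Tate group (for a non-CM elliptic curve, $\Hg A=\SL_2$ admits no cocharacter acting on $V_\C$ with weights $0,1$, since the weights of any cocharacter of $\SL_2$ on $V_\C$ sum to $0$); you must instead use $\varphi_\C\colon\gm\to G_\C$, which has exactly the two weights $\pm1$ on $V_\C$, or project to the adjoint group --- the pigeonhole argument survives unchanged. Second, the heart of the proof --- at most one factor $H_k$ engaged per constituent, and each $\Q$-simple factor of $G'$ engaged on at least one constituent --- is asserted rather than proved, as you yourself flag. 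Both statements are true: two engaged factors would make the set of integer weights on $\rho_\chi$ contain at least three values, and a nonzero cocharacter component in an almost simple factor cannot act with a single weight on a nontrivial irreducible constituent (its weights are Weyl-stable, sum to zero, and span the weight space because the Weyl group acts $\Q$-irreducibly), so faithfulness of $V$ produces $\rho_{\chi_i}$; but this, together with lifting $\varphi_\C$ (or a multiple of it) through the isogeny $Z(G)^\circ\times\prod_k H_k\to G$, must be written out --- as it stands it is an unproved substitute for the paper's citation of Satake. Third, the identification $\overline{\rho_\chi}=\rho_{\chi\circ\iota}$, on which your injection into $\Hom(F,\C)/\langle\iota\rangle$ rests, is not formal: it is precisely the positivity of the Rosati involution ($F$ is a product of totally real and CM fields and $\iota$ is realized by complex conjugation in every embedding), the same input from Mumford, Section 21, that the paper uses in Lemma 2; for a general involution (e.g.\ the nontrivial automorphism of a real quadratic field) the statement is false. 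Finally, the last step needs that complex conjugation carries the engaged factor of $\rho_\chi$ to that of $\rho_{\bar\chi}$, which uses ${}^{\sigma}\varphi_\C=\varphi_\C^{-1}$, i.e.\ that $\varphi$ is defined over $\R$; this should be stated.
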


Before proving the Theorem and deducing Corollary 1, we describe the necessary properties of the Hodge group.

\begin{proposition*}[see \cite{Mumford-Families}]
Let $A$ be an Abelian variety over $\C$. Then
\begin{enumerate}[\upshape(a)]
\item The Hodge group $G=\Hg A$ is a connected reductive group.
\item The centralizer $K$ of $\im\,\varphi$ in $G_\R$ is a maximal compact subgroup of $G_\R$\,.
\item $G'_\R$ is a group of Hermitian type (i.e., its symmetric space admits a structure
of a Hermitian symmetric space).
\item The algebra $\End^\circ A$ is the centralizer of $\Hg A$ in $\End\, V$.
\item For any polarization $P$ of $A$, the Hodge group $\Hg A$ respects
the corresponding nondegenerate skew-symmetric form $\psi_P$ on the space $V$.
\end{enumerate}
\end{proposition*}

\noindent
{\em Deduction of Corollary \ref{cor:1} from the Theorem.} Denote by $C$ the center of $G$.
From assertion (d) of the Proposition, it follows that $C\subset F^*$.
Hence, under the hypotheses of the corollary we have $C\subset \Q^*$.
From assertion (b) of the Proposition it follows that $C_\R$ is a compact group,
hence, $C$ is a finite group and $G=G'$. By virtue of the Theorem, $r=1$ and $G$ is a $\Q$-simple group.

\begin{lemma}\label{lem:1}
Let $H$ be a normal subgroup of $G$ defined over $\Q$ such that the $\R$-group $H_\R$ is compact.
Then $H\subset C$  (where $C$ denotes the center of $G$).
\end{lemma}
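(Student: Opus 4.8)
The plan is to use part~(b) of the Proposition, which identifies the centralizer $K$ of $\im\,\varphi$ in $G_\R$ as a maximal compact subgroup of $G_\R$. A compact normal subgroup of $G_\R$ is forced to lie inside $K$, hence to commute with $\im\,\varphi$; and the \emph{minimality} built into the definition of $\Hg A$ should then force all of $G$ to commute with $H$, giving $H\subset C$.

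Concretely, I would first observe that $H_\R$, being compact, is contained in some maximal compact subgroup $K_0\subset G_\R$; since all maximal compact subgroups of $G_\R$ are conjugate, $K=gK_0g\inv$ for some $g\in G_\R$, and since $H$ is normal in $G$ (so $H_\R$ is normal in $G_\R$) we get $H_\R=gH_\R g\inv\subseteq gK_0g\inv=K$. By the description of $K$ in~(b), every element of $H_\R$ commutes with every element of $\im\,\varphi$; equivalently $\im\,\varphi$ lies in the centralizer of $H_\R$ in $G_\R$.

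Next I would pass from groups of real points to the algebraic group. Assume first that $H$ is connected; then $H$ is connected reductive, being a connected normal subgroup of the connected reductive group $G$, so $H(\R)$ is Zariski dense in $H$, and any element of $G(\R)$ commuting with the matrices $H(\R)$ commutes with all of $H$. Hence $\im\,\varphi$ lies in the group of real points of the (scheme-theoretic) centralizer $Z_G(H)$, which is an algebraic subgroup of $\GL(V)$ defined over $\Q$. By the definition of $G=\Hg A$ as the smallest $\Q$-subgroup of $\GL(V)$ whose group of real points contains $\im\,\varphi$, we get $G\subseteq Z_G(H)$, i.e.\ $H\subseteq Z(G)=C$. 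If $H$ is not connected, this applies to $H^\circ$ and shows $H^\circ\subseteq C$; then for $h\in H$ the morphism $g\mapsto h\inv g h g\inv$ takes values in $H^\circ\subseteq Z(G)$ (the elements $h$ and $ghg\inv$ lie in the same connected component of $H$), is a group homomorphism $G\to G$ by a one-line computation using centrality of its values, and has image in the commutator subgroup, hence in $G'\cap Z(G)$, which is finite; since $G$ is connected this homomorphism is trivial, so $h$ is central. Thus $H\subseteq C$ in all cases.

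The step I expect to be the main obstacle is the bookkeeping at the interface between the topological and the algebraic centralizer — in particular recording that the right object is the $\Q$-subgroup $Z_G(H)$ of $\GL(V)$, and handling a possibly non-connected $H$. Everything else is the short chain: compact and normal $\Rightarrow$ contained in $K$ $\Rightarrow$ commutes with $\im\,\varphi$ $\Rightarrow$ (by minimality of $\Hg A$) centralized by all of $G$.
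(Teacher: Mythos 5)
Your proof is correct and follows essentially the same route as the paper: compactness plus normality places $H_\R$ inside the maximal compact $K$ from part~(b), so $\im\,\varphi$ lies in the $\Q$-defined centralizer of $H$ in $G$, and the minimality in the definition of $\Hg A$ forces $G\subset \sZ(H)$, i.e.\ $H\subset C$. The extra steps you supply (Zariski density of $H(\R)$ to pass to the algebraic centralizer, and the rigidity argument for non-connected $H$) are details the paper leaves implicit, not a different method.
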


\begin{proof}
By assertion (b) of the Proposition we have $H_\R\subset K$, where $K$ is the centralizer of $\im\,\varphi$ in $G_\R$.
Therefore, $\im\,\varphi$ is contained in the centralizer (defined over $\Q$) $\sZ(H)$ of $H$ in $G$.
Then it follows from the definition of the Hodge group that $G\subset\sZ(H)$. Hence $H\subset C$.
\end{proof}

\begin{lemma} \label{lem:2}
Consider the natural representation  $\rho$ of the group $G'_\R$ in the vector space $V_\R$.
Denote by $r_\rho$ the number of pairwise nonequivalent summands in the decomposition of $\rho$
into a direct sum of $\R$-irreducible representations.
Then $r_\rho=\dim_\Q F_0$.
\end{lemma}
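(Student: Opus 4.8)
The plan is to convert the integer $r_\rho$ into a count of simple factors of an endomorphism algebra, to evaluate that count for the \emph{full} Hodge group using assertion~(d) of the Proposition together with the positivity of the Rosati involution, and finally to descend from $G_\R$ to $G'_\R$ using the polarization and the special shape of the Hodge structure. For the first step: $G$ is reductive by assertion~(a), hence so are $G'$ and $G'_\R$, so $V_\R$ is a semisimple $G'_\R$-module; for a semisimple module over any field the number of pairwise nonequivalent irreducible constituents equals the number of simple two-sided factors of its endomorphism algebra, and therefore $r_\rho$ is the number of simple factors of $\End_{G'_\R}(V_\R)$ — and, likewise, the number of pairwise nonequivalent $\R$-irreducible summands of the $G_\R$-module $V_\R$ is the number of simple factors of $\End_{G_\R}(V_\R)$.

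I would next establish the statement with $G$ in place of $G'$. By assertion~(d) of the Proposition, $\End^\circ A=\End_G(V)$, and since forming the commutant of a linear representation commutes with the field extension $\Q\subset\R$, one has $\End_{G_\R}(V_\R)=(\End^\circ A)\otimes_\Q\R$, a semisimple $\R$-algebra whose center is $F\otimes_\Q\R$. The Rosati involution is positive, hence so is its restriction to the commutative semisimple algebra $F$; a positive involution of $F$ preserves each of its field factors and acts on each of them either trivially (on the totally real factors) or by complex conjugation (on the CM factors), so $F_0$ is a product of totally real fields. Counting archimedean places factor by factor then shows that the number of archimedean places of $F$ — which equals the number of simple factors of $F\otimes_\Q\R$, hence the number of simple factors of $(\End^\circ A)\otimes_\Q\R$ — is exactly $\dim_\Q F_0$. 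Thus $V_\R$ has exactly $\dim_\Q F_0$ pairwise nonequivalent $\R$-irreducible summands as a $G_\R$-module.

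It remains to descend from $G_\R$ to $G'_\R$. Let $C$ be the center of $G$; then $G_\R=G'_\R\cdot C_\R$ with $C_\R$ central, $C_\R$ is compact by assertion~(b), and its image lies in $(F\otimes_\Q\R)^\times$ by assertion~(d). Because $C_\R$ is central, a $G_\R$-irreducible summand of $V_\R$ restricted to $G'_\R$ either stays irreducible or becomes a multiple of a single $G'_\R$-irreducible; combined with the inclusion $\End_{G'_\R}(V_\R)\supseteq\End_{G_\R}(V_\R)$, this yields the easy inequality $r_\rho\le\dim_\Q F_0$. The real content is the reverse inequality: that restricting to $G'_\R$ does not identify two inequivalent $G_\R$-isotypic summands of $V_\R$ — equivalently, that no two of them differ by twisting by a nontrivial character of $G_\R/G'_\R$. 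To prove this I would use that $\varphi\colon T^1\to G_\R$ acts on $V_\R$ with only the two eigenvalues $z$ and $\bar z$ (since $V=H_1(A)$), together with the $G_\R$-invariant polarization form $\psi_P$ of assertion~(e), whose adjoint involution on $\End^\circ A$ is the Rosati involution; these two rigidities should constrain the action of the central torus $C_\R$ on the isotypic pieces tightly enough that the only identifications it can induce among them are precisely those already encoded in the equality "number of archimedean places of $F$ $=\dim_\Q F_0$". I expect this last step — controlling the central torus so that the count cannot drop below $\dim_\Q F_0$ — to be the main obstacle; the reduction to a commutant and the evaluation of its number of simple factors are routine once semisimplicity and the classification of positive involutions are invoked.
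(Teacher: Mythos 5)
Your second paragraph is, in substance, the paper's entire proof: by assertion~(d), $\End_{G_\R}(V_\R)=\End^\circ A\otimes_\Q\R$, and the positivity of the Rosati involution (Mumford, Section~21) forces it to act trivially on the real factors of the center $F\otimes_\Q\R$ and by complex conjugation on the complex ones, so that $F_0\otimes_\Q\R\cong\R\times\dots\times\R$ and $\dim_\Q F_0$ equals the number of simple factors of $\End^\circ A\otimes_\Q\R$, i.e.\ the number of inequivalent irreducible summands of $V_\R$ \emph{for the full group} $G_\R$. Where you diverge is the passage from $G_\R$ to $G'_\R$: the paper does this in one line, simply asserting $r_\rho=r_{\AA}$ ``by (d)'', i.e.\ it counts isotypic pieces for $G_\R$ and does not discuss possible merging of classes upon restriction to the derived group. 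Your argument for the inequality $r_\rho\le\dim_\Q F_0$ (each $G_\R$-irreducible is $G'_\R$-isotypic because the center acts by $G'$-equivariant maps) is correct, and it is worth noting that this inequality is the only thing the Theorem actually uses, since the proof there only needs $r_\nc\le r_\rho\le\dim_\Q F_0$.

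As a proof of the Lemma as stated, however, your attempt has a genuine gap: the reverse inequality $r_\rho\ge\dim_\Q F_0$ is never proved, only hoped for (``these two rigidities should constrain\dots''), and no argument along the lines you sketch can close it in general, because with $G'_\R$-equivalence the equality can genuinely fail when $G$ has a positive-dimensional compact central torus. For instance, if $A=E_1\times E_2$ is a product of non-isogenous elliptic curves with complex multiplication by $K_1\ne K_2$, then $\Hg A$ is a torus, $G'$ is trivial, and $r_\rho=1$, while $F=K_1\times K_2$, $F_0=\Q\times\Q$, $\dim_\Q F_0=2$: two inequivalent $G_\R$-isotypic components differing only by the action of the compact central torus are identified on restriction to $G'_\R$, which is precisely the phenomenon you were worried about. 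So the correct resolution is not to fight for equality via the eigenvalues $z,\bar z$ of $\varphi$ and the form $\psi_P$, but either to read the Lemma, as the paper implicitly does, as counting the $G_\R$-isotypic components of $V_\R$ (in which case your second paragraph already finishes the proof), or to content yourself with the inequality $r_\rho\le\dim_\Q F_0$, which suffices for the Theorem.
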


\begin{proof}
We set $\AA=\End^\circ A\otimes_\Q \R$ and write the decomposition
$$
\AA_1+\dots+\AA_{r_\AA}.
$$
of the semisimple $\R$-algebra $\AA$ into a sum of simple $\R$-algebras.
By the assertion (d) of the Proposition we have $r_\rho=r_\AA$.
Furthermore, it is known (see \cite[Section 21]{Mumford-book}) that the Rosati involution
acts on the center $F_i$ of the algebra $\AA_i$ trivially if $F_i=\R$,
and as the complex conjugation if $F_i=\C$.
It follows that
$$F_0\otimes_\Q \R=\R+\dots +\R$$
($r_\AA$ summands)
whence $\dim_\Q F_0=r_\AA$.
Thus $\dim_\Q F_0=r_\AA=r_\rho$.
\end{proof}

\begin{proof}[Proof of the theorem]
Denote by $r_\nc$ the number of noncompact groups in the decomposition
$$
G'_\R=G_1\cdot G_2 \cdots\cdot  G_N
$$
of the group $G'_\R$ in an almost direct product of simple $\R$-groups.
It is known from results of Satake \cite[Theorem 2]{Satake}
that for each $\R$-irreducible representation $\rho'$
in the decomposition of the representation $\rho$  into a direct sum of $\R$-irreducibles,
there exist not more that one noncompact group $G_i$ $(1\le i\le N)$
such that the restriction $\rho'|_{G_i}$ is nontrivial.
Therefore, $r_\nc\le r_\rho$.
Taking in account Lemma 2, we obtain that $r_\nc\le\dim_\Q F_0$\, .

Further, since $G'_\R$ is of Hermitian type,
we see that all the groups $G_1,\dots,G_N$ are of Hermitian type as well,
and hence they are absolutely simple.
Consider the action of the Galois group $\Gal(\Qbar/\Q)$
on the set of the simple factors $G_1,\dots,G_N$.
The orbits of the Galois group bijectively correspond to $\Q$-simple normal subgroups of $G'$.
By Lemma 1 each orbit contains at least one noncompact group $G_i$.
Thus the number of orbits, i.e., the number $r$ of $\Q$-simple normal subgroups of $G'$,
does not exceed the number $r_\nc$ of noncompact groups among $G_1,\dots,G_N$.
We obtain that $r\le r_\nc\le\dim_\Q F_0$, which completes the proof of the theorem.
\end{proof}

\begin{corollary}
Assume that $\End^\circ A=\Q$.
Write the decomposition
$$
\rho_\C=\rho_1\otimes\dots\otimes\rho_N
$$
of the irreducible representation $\rho_\C$ of the semisimple group $G_\C$
in the vector space $V_\C$ into a tensor product of irreducible representations
$\rho_i$ of the  universal coverings $\widetilde{G}_{i\C}$ $(i=1,\dots, N)$
of the simple factors $G_{i\C}$ of $G_\C$.
Then each of the representations $\rho_i$ respects a nondegenerate
skew-symmetric bilinear form, and the number $N$ is odd.
\end{corollary}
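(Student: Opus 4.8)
The plan is to reduce the corollary to two facts about the irreducible representation $\rho_\C$: that it is \emph{symplectic} (carries a nondegenerate invariant alternating form), and that its tensor factors are permuted transitively by $\Gal(\Qbar/\Q)$. First I would record the structural input. Since $\End^\circ A=\Q$, Corollary~\ref{cor:1} and the computation in its deduction show that $G=\Hg A$ satisfies $G=G'$ and is $\Q$-simple; in particular $G_\C$ is semisimple. Assertion~(d) of the Proposition gives $\End_G(V)=\End^\circ A=\Q$, hence the centralizer of $G_\C$ in $\End(V_\C)$ is $\C$, so $\rho_\C$ is irreducible; pulling back along the universal covering $\widetilde G_\C=\widetilde G_{1\C}\times\cdots\times\widetilde G_{N\C}$ and using the classification of irreducible representations of a direct product yields the factorization $\rho_\C=\rho_1\otimes\cdots\otimes\rho_N$ of the statement, with each $\rho_i$ irreducible.

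Next I would extract the symplectic type. Fixing a polarization $P$ and invoking assertion~(e), the group $G_\C$ preserves the nondegenerate alternating form $\psi_P$ on $V_\C$; by Schur's lemma (irreducibility), $\psi_P$ is, up to a scalar, the only $G_\C$-invariant bilinear form on $V_\C$, so $\rho_\C$ is self-dual of symplectic type, i.e.\ its Frobenius--Schur indicator equals $-1$. The key elementary point is multiplicativity of this indicator over a direct product: if $\rho_i$ is self-dual with invariant form $b_i$ of type $\ve_i\in\{+1,-1\}$, then $b_1\otimes\cdots\otimes b_N$ is a $\widetilde G_\C$-invariant form on $V_\C$ of type $\prod_{i=1}^N\ve_i$, hence proportional to $\psi_P$, so $\prod_{i=1}^N\ve_i=-1$; and if some $\rho_i$ were not self-dual, then neither would $\rho_\C$ be, a contradiction. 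Hence every $\rho_i$ is self-dual, each $\ve_i$ is well defined, and $\prod_{i=1}^N\ve_i=-1$.

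Finally I would use $\Q$-simplicity of $G$ to upgrade ``$\prod_i\ve_i=-1$'' to ``$\ve_i=-1$ for all $i$''. As in the proof of the Theorem, $\Q$-simplicity means $\Gal(\Qbar/\Q)$ acts transitively on $\{G_{1},\dots,G_{N}\}$; since $V$, and hence $\rho$, is defined over $\Q$, a Galois element carrying $G_i$ to $G_j$ carries an invariant form of type $\ve_i$ on $\rho_i$ to one of type $\ve_i$ on $\rho_j$, so all the $\ve_i$ coincide. Together with $\prod_i\ve_i=-1$ this forces $\ve_i=-1$ for every $i$ — that is, each $\rho_i$ respects a nondegenerate skew-symmetric bilinear form — and $(-1)^N=-1$, i.e.\ $N$ is odd. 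I expect the only delicate point to be the bookkeeping around self-duality: one must check that the (scalar-unique) invariant form on $V_\C$ really is the tensor product of invariant forms on the factors, so that the types genuinely multiply, and that this identification is compatible with the Galois action — both standard, but that is where the argument actually lives.
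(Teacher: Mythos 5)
Your proposal is correct and is essentially the paper's own argument, merely spelled out: the paper's two-line proof likewise combines Corollary~1 (Galois acts transitively on the factors $G_{i\C}$ and the $\rho_i$, forcing all types to coincide) with assertion~(e) (the invariant form $\psi_P$ makes $\rho_\C$ symplectic), the multiplicativity of the type over the tensor factors and the parity conclusion being left implicit there. Your filling in of the Schur-lemma uniqueness, the self-duality of each factor, and the Galois-compatibility of the invariant forms is exactly the intended bookkeeping.
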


\begin{proof}
Indeed, by Corollary 1 the Galois group permutes transitively
the groups $G_{i\C}$ and the representations $\rho_i$.
Now Corollary 2 follows from assertion (e) of the Proposition.
\end{proof}

\bigskip

\hfill {\Small Submitted on November 17, 1980.}

\end{document}